\RequirePackage{fix-cm}
\documentclass[smallextended,natbib]{svjour3}

\smartqed

\usepackage{amsmath,amssymb}
\usepackage{tikz}
\usetikzlibrary{arrows.meta}
\usepackage{algorithm}
\usepackage{algorithmic}
\usepackage{xcolor}

\definecolor{links}{RGB}{204,36,29}
\usepackage[colorlinks=true,breaklinks=true,bookmarks=true,urlcolor=links,citecolor=links,linkcolor=links,bookmarksopen=false,draft=false]{hyperref}

\def\x{\boldsymbol{x}}
\def\y{\boldsymbol{y}}
\def\a{\boldsymbol{a}}
\def\q{\boldsymbol{q}}
\def\p{\boldsymbol{p}}

\def\R{\mathbb{R}}

\def \HM {\mathrm{HM}}

\journalname{Economic Theory Bulletin}

\begin{document}

\title{A Polynomial-Time Test for Peak-Oriented Rationalizability}

\titlerunning{A Polynomial-Time Test for Peak-Oriented Rationalizability}

\author{Taotao He \and Runfa Hu}

\authorrunning{T. He and R. Hu}

\institute{
Taotao He \at
Antai College of Economics and Management, Shanghai Jiao Tong University\\
\email{hetaotao@sjtu.edu.cn}
\and
Runfa Hu \at
Antai College of Economics and Management, Shanghai Jiao Tong University
}

\maketitle

\begin{abstract}
We study the computational complexity of peak-oriented rationalizability, a survey based revealed-preference test introduced by~\cite{seror2026concaverationalizationidealpoint}. We provide a polynomial-time algorithm for testing rationalizability and recovering a utility function, establishing that peak-oriented preference elicitation is computationally tractable. In contrast, we show that computing the peak-oriented Houtman-Maks index is NP-hard. These results delineate  the precise computational boundaries of peak-oriented revealed-preference analysis.
\keywords{Revealed preference \and Peak-oriented rationalizability \and Computational complexity \and Prefix tree \and Houtman--Maks index}
\end{abstract}

\noindent\textbf{JEL Classification:} D11; C63

\section{Introduction}
Revealed preference theory studies whether observed choices can be rationalized as utility maximization behavior. In classical consumer theory,~\citet{afriat1967construction} characterizes rationalizability via the generalized axiom of revealed preference (GARP). Subsequently,~\citet{varian1982nonparametric} provides a polynomial-time algorithm for testing GARP and recovering a utility function rationalizing the observed choices. This computational tractability has established revealed preference theory as a central tool for  demand analysis. However, while researchers frequently seek to test preferences with additional properties \citep[see][Section 4]{chambers2016revealed}, such tractability does not automatically hold under additional structures; for example, testing for separable rationalization is  NP-complete~\citep{echenique2014testing,cherchye2015revealed}.

In the context of survey data, where respondents complete the same survey under different linear budget constraints,~\cite{seror2026concaverationalizationidealpoint} develops an Afriat-type characterization of concave rationalization with an unknown peak point, which we review in Section~\ref{defseror}. Specifically, he introduces the notion of peak-oriented rationalization and reduces its verification to a two-step test: first, checking whether a candidate peak satisfies a cone-membership condition that determines the orientation of each budget normal; second, verifying whether the resulting oriented data satisfies a GARP-type condition. Finally, Seror presents an empirical study illustrating how this framework can elicit political preferences using survey data from French respondents.

Seror's characterization prompts a Varian-type computational question: is peak-oriented rationalizability testable in polynomial time with respect to the number of survey questions and rounds? We resolve this affirmatively in Section~\ref{test}. Given a fixed orientation pattern, the second step reduces to a standard GARP test, solvable in polynomial time. The key bottleneck is therefore the enumeration of orientation patterns generated by candidate peak points. Seror shows that $2^n$ patterns can arise theoretically; however, we prove that only at most $2(m+1)n$ patterns are relevant, where $m$ represents the number of survey questions and $n$ the number of rounds. Moreover, leveraging a prefix tree construction~\citep{fredkin1960trie, knuth1998art} over the observed answers, we obtain an algorithm that enumerates all relevant orientation patterns in polynomial time.

The computational landscape changes when the question shifts from whether the entire dataset is rationalizable to how much of it can be rationalized. When a dataset fails the peak-oriented rationalizability test, \cite{seror2026concaverationalizationidealpoint} uses a Houtman--Maks-type index to measure the degree of irrationality, defined as the largest fraction of observation rounds that form a peak-oriented rationalizable subset. In Section~\ref{hardness}, we show that computing this peak-oriented Houtman--Maks index is NP-hard. Thus, while testing exact peak-oriented rationalizability is polynomial-time solvable, measuring the maximal extent of rationalizability is computationally intractable unless P = NP.

\textbf{Notation:} For a positive integer $n$, let $[n]:=\{1, \ldots, n\}$. For two vectors $\x,\y \in \R^n$, let $\langle \x, \y \rangle$ denote the inner product, that is, $\sum_{i \in [n]}x_iy_i$. For a concave function $u$, let $\partial u(q)$ denote its superdifferential at $q$. For a real number $x$, the sign operation $\operatorname{sign}(x)$ returns $+1$ for $x>0$, $0$ for $x=0$, and $-1$ for $x<0$. We identify the signs $+$ and $-$ with the numbers $+1$ and $-1$, respectively, whenever they are used algebraically. 

\section{Peak-Oriented Rationalization} \label{defseror}
In this section, we review the peak-oriented rationalization and its characterizations proposed in~\cite{seror2026concaverationalizationidealpoint}. Consider a survey with $m$ questions. For each question $j \in [m]$, the set of possible answers is $[0, M_j]$ for some $M_j > 0$. Let  $X:=\prod_{j=1}^m[0,M_j]$ denote the space of all possible answers.  The respondent answers the survey over $n$ rounds. In each round $i\in[n]$, the respondent provides a vector of answers $\q^i$ subject to a linear budget constraint, $B^i:=\bigl\{\x\in X \bigm| \langle \mathbf{a}^i,  \x \rangle = \mu^i\bigr\}$,
where $\a^i\in\mathbb{R}^m$ is the normal vector of the budget hyperplane and $\mu^i\in\mathbb{R}$ is a constant; see~\cite{seror2026concaverationalizationidealpoint} for their interpretations in the survey setting. We maintain their assumptions that all coordinates of $\a^i$ are nonzero, $B^i$ intersects the interior of $X$, and $\q^i\in B^i$.

\begin{definition}{\citep[Definition~5]{seror2026concaverationalizationidealpoint}}
A dataset $D:=\{(\q^i,B^i)\}_{i \in [n]}$ is \emph{peak-oriented rationalizable} if there exist a peak $\y^*\in X$, a continuous concave function $u:X\to\mathbb{R}$, and scalars $\lambda^i \in\mathbb{R}$ such that
\begin{enumerate}
    \item[(i)] $\y^* \in\arg\max_{\x\in X}u(\x)$ \label{def:1};
    \item[(ii)] $\q^i\in\arg\max_{\x\in B^i}u(\x)$ for  $i\in[n]$ \label{def:2};
    \item[(iii)] $\lambda^i \a^i \in\partial u(\q^i)$ for  $i\in[n]$\label{def:3};
    \item[(iv)] $\lambda^ia^i_j (y^*_j-q^i_j) \geq 0$ for  $i\in[n]$ and $j\in[m]$ \label{def:4};
    \item[(v)] $\langle \lambda^i \a^i, \y^*- \q^i\rangle>0$ for  $i\in[n]$ with $\q^i\neq \y^*$.\label{def:5}
\end{enumerate}
\end{definition}
\noindent Conditions (i)--(ii) have the standard utility maximization interpretation. Condition (iii) is a first-order optimality condition at $\q^i$. Conditions (iv)--(v) require the selected supergradient at each observation to point coordinatewise toward the peak $\y^*$, with strict progress whenever $\q^i\neq \y^*$.
When $\q^i=\y^*$, condition (v) is vacuous and other conditions hold automatically under $\lambda^i = 0$, so an observation at the peak does not determine the orientation of its budget normal. 

The geometry of condition (iv)  is characterized by~\citep[Lemma~1]{seror2026concaverationalizationidealpoint}. For each $i\in[n]$, define the oriented orthants
\begin{align*}
  O^i_+ &:= \bigl\{\x \in X \bigm| a_j^i(x_j-q_j^i)\geq0 \, \text{ for } j \in [m] \bigr\}\\
  O^i_- &:= \bigl\{\x \in X \bigm| a_j^i(x_j-q_j^i)\leq0 \, \text{ for } j \in [m] \bigr\},
\end{align*}
and the \emph{double cone} $C^i:=O^i_+\cup O^i_-$. Note that since coordinates of $\a^i$ are assumed to be nonzero,  $O^i_+\cap O^i_-=\{\q^i\}$ for $i \in [n]$. Thus, for $\y \neq \q^i$, the orientation conditions hold if and only if $\y \in C^i$. Moreover, such $\y$ determines a unique sign $\sigma^i(\y) \in \{+, -\}$, which equals $+$ if $\y \in O^i_+$ and $-$ if $\y \in O^i_-$. 

Theorem~2 of~\cite{seror2026concaverationalizationidealpoint} provides a characterization of peak-oriented rationalizability, which is further interpreted as a two-step test~\citep[Section 2.6]{seror2026concaverationalizationidealpoint}. First, identify a candidate peak that belongs to the double cone for each relevant observation, thereby uniquely determining the orientation of each budget normal. Second, verify that after reorienting the budget normals, the retained oriented data satisfy the Generalized Axiom of Revealed Preference (GARP). Recall that a purchase dataset $E = \{(\x^t, \p^t)\}_{t \in [T]}$ satisfies GARP if there is no sequence of indices $t_1, \ldots, t_K \in [T]$ such that~\cite[Definition~1]{dziewulski2024revealed}
\begin{equation}\label{eq:GARP}
\langle \p^{t_k},  \x^{t_k} - \x^{t_{k+1}} \rangle \geq 0  \text{ for } k\in [K-1] \quad \text{ and } \quad \langle \p^{t_K}, \x^{t_K} - \x^{t_1} \rangle  > 0. \tag{\textsc{Garp}}
\end{equation}
We formally summarize this result below.\begin{theorem}\label{them:peak-garp}
The dataset $D$ is peak-oriented rationalizable if and only if there exists $\y\in X$ such that, letting
\[
I(\y):=\{i\in[n]\mid \q^i\neq\y\},
\] 
\begin{enumerate}
    \item[(a)]  $\y \in C^i$ for every $i \in I(\y)$; and \label{them:peak-garp-a}
    \item[(b)] the retained oriented data $\bigl\{\bigl(\q^i, \sigma^i(\y)\a^i \bigr)\bigr\}_{i \in I(\y)}$ satisfy~\eqref{eq:GARP}\label{them:peak-garp-b}.
\end{enumerate}
\end{theorem}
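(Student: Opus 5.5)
The plan is to prove the two directions separately, using Afriat's theorem in the concave/GARP form (e.g., \citet{afriat1967construction} or \cite[Theorem~1]{dziewulski2024revealed}) as the workhorse. Throughout, the choice $\y=\y^*$ links the peak in the definition to the point $\y$ in the statement.

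\emph{Necessity.} Suppose $(\y^*,u,\{\lambda^i\})$ witnesses peak-oriented rationalizability, and set $\y:=\y^*$.
\begin{itemize}
\item[1.] For $i\in I(\y)$, condition (v) gives $\lambda^i\neq 0$. Condition (iv) says that $\operatorname{sign}(\lambda^i)\,a^i_j(y_j-q^i_j)\ge 0$ for all $j$. Hence $\y\in O^i_{+}$ if $\lambda^i>0$ and $\y\in O^i_{-}$ if $\lambda^i<0$, so (a) holds and $\sigma^i(\y)=\operatorname{sign}(\lambda^i)$.
\item[2.] For (b), put $\p^i:=\sigma^i(\y)\a^i$. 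Then $\lambda^i\a^i=|\lambda^i|\p^i$ with $|\lambda^i|>0$, and $|\lambda^i|\p^i\in\partial u(\q^i)$.
\item[3.] Supergradient inequalities give $u(\x)\le u(\q^i)+|\lambda^i|\langle \p^i,\x-\q^i\rangle$. So $\langle\p^{t_k},\q^{t_k}-\q^{t_{k+1}}\rangle\ge0$ implies $u(\q^{t_{k+1}})\le u(\q^{t_k})$, and a strict inequality yields a strict utility drop.
\item[4.] Along a GARP-violating cycle this gives $u(\q^{t_1})<u(\q^{t_1})$, a contradiction. This is the usual concave Afriat argument, and it needs no monotonicity.
\end{itemize}

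\emph{Sufficiency.} Given $\y$ satisfying (a) and (b), let $\p^i:=\sigma^i(\y)\a^i$ for $i\in I(\y)$.
\begin{itemize}
\item[1.] GARP yields Afriat numbers $U^i$ and $\beta^i>0$ with $U^k\le U^i+\beta^i\langle\p^i,\q^k-\q^i\rangle$. I would obtain these via Varian's cyclical-monotonicity construction, which works for arbitrary nonzero normals.
\item[2.] The new ingredient is the peak. Augment the data with the point $\y$, assign it a utility level $U^0$ large enough, and set its supergradient to $0$. The needed inequalities are:
\begin{itemize}
\item[(1)] $U^0\le U^i+\beta^i\langle\p^i,\y-\q^i\rangle$ for each $i$;
\item[(2)] $U^i\le U^0$.
\end{itemize}
\item[3.] By (a) and the sign choice, each term satisfies $\sigma^i a^i_j(y_j-q^i_j)\ge0$. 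The inner product $\langle\p^i,\y-\q^i\rangle$ is strictly positive, because $\y\neq\q^i$ and all $a^i_j\neq0$, so some coordinate contributes strictly.
\item[4.] We may therefore rescale: multiply all $\beta^i$ and $U^i$ by a common large factor, or equivalently choose $U^0:=\max_i U^i$. We then need $\max_k U^k - U^i\le\beta^i\langle\p^i,\y-\q^i\rangle$ for every $i$, which is the delicate step.
\end{itemize}

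This is the main obstacle: a naive choice of $U^0$ may fail (1) for observations with low $U^i$. I would handle it through the Afriat linear program itself. Add the node $\y$ with price $\p^0=0$ and the constraints $U^0\ge U^i$. Then check feasibility: the augmented system is equivalent to GARP on the augmented data, in which the zero price at $\y$ creates no new revealed-strict relations from $\y$. Any cycle passing through $\y$ must leave $\y$ via $\langle 0,\cdot\rangle\ge0$ and enter through a strict relation $\langle\p^i,\q^i-\y\rangle<0$. That is a weak reversal, not a revealed preference, so no violating cycle is created. A Farkas/Afriat argument then gives feasibility.

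Finally, define
\[
u(\x):=\min\Bigl\{U^0,\;\min_i\bigl(U^i+\beta^i\langle\p^i,\x-\q^i\rangle\bigr)\Bigr\}.
\]
This $u$ is concave and continuous, and $\max_X u=U^0$ is attained at $\y$, which gives (i). Each $\q^i$ is optimal on $B^i$ because $\langle\a^i,\x-\q^i\rangle=0$ there, which gives (ii). With $\lambda^i:=\sigma^i\beta^i$, we have $\lambda^i\a^i\in\partial u(\q^i)$, which gives (iii). Conditions (iv) and (v) follow from (a) and the strict positivity in step 3. Observations with $\q^i=\y$ are handled by $\lambda^i=0$, since $\y$ is the global maximizer.
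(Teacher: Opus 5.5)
Your proof is correct and follows the route the paper relies on. The paper does not reprove this theorem but takes it from Seror; your augmented system, namely the Afriat inequalities on the reoriented data plus a peak node with $U^i\le U^0\le U^i+\beta^i\langle\p^i,\y-\q^i\rangle$, is exactly the oriented Afriat system \eqref{eq:afriat}, and your $u$ is \eqref{eq:utility}.

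One point should be made explicit. Your feasibility step needs Afriat's theorem in its general matrix form \citep{fostel2004two}, which allows arbitrary rows, including the sign-flipped normals and the zero price at $\y$. With that version, your fact $\langle\p^i,\q^i-\y\rangle<0$ for all $i\in I(\y)$ means no observation is even weakly revealed preferred to $\y$, so no GARP cycle can pass through the new node. The common-factor rescaling in your step 4 does nothing, since both sides scale together, and can simply be dropped.
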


\section{A Polynomial-Time Rationalizability Test}\label{test}
For a fixed orientation pattern $\sigma(\y):=(\sigma^i(\y))_{i\in[n]}$, condition (b) in Theorem~\ref{them:peak-garp} is a standard GARP test that can be performed in polynomial time; see~\citep{varian1982nonparametric,fostel2004two} and the survey of recent developments in~\citep{smeulders2019algorithmic}. Thus, the remaining challenge is to enumerate the orientation patterns induced by feasible peak points.

Let $Q:=\{\q^1,\ldots,\q^n\}$ denote the set of survey answers. For peaks outside $Q$, the set of feasible orientation patterns is
\begin{equation*}
\Omega:=\left\{\sigma(\y)\in\{-,+\}^n \ \middle|\ \y\in X\setminus Q,\ \y\in\bigcap_{i\in[n]}C^i\right\}.
\end{equation*}
A natural approach to enumerating $\Omega$ is to test each candidate orientation pattern for membership. 
For each $\sigma\in\{-,+\}^n$, define its associated region:
\begin{equation}\label{eq:orientation_region}
    \mathcal{R}_\sigma:=\bigcap_{i \in [n]} O^i_{ \sigma^i}
\end{equation}
The region $\mathcal{R}_\sigma$ contains precisely the points in $X$ that induce the orientation pattern $\sigma$. Consequently, membership in $\Omega$ can be characterized via the nonemptiness check:
\begin{equation}\label{eq:mem-test}
    \sigma \in \Omega \quad \text{if and only if} \quad \mathcal{R}_\sigma\setminus Q\neq\varnothing.
\end{equation}
However, exhaustive enumeration over $\{-, +\}^n$ requires $2^n$ membership tests and is computationally intractable for large $n$.

We show that by exploiting the combinatorial structure of the observed survey answers via a prefix-tree representation, $\Omega$ contains at most $2(m+1)n$ patterns, enabling complete polynomial-time enumeration.



\begin{lemma}\label{lem:patterns}
$\vert \Omega \vert \leq 2(m+1)n$.
\end{lemma}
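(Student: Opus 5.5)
The plan is to show that every pattern in $\Omega$ is determined by a discrete ``address'' of the peak $\y$, and that there are at most $2mn$ such addresses. These addresses are the leaves of a prefix tree (trie) built on the answer vectors $\q^1,\ldots,\q^n$, read coordinate by coordinate.

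\emph{Step 1: the orientation can be read off any one coordinate.} Take $\y\in\bigcap_i C^i$ with $\y\notin Q$, and fix $i\in[n]$. Since $\y\neq\q^i$, there is a coordinate $j$ with $y_j\neq q^i_j$. By definition of $O^i_\pm$, $\y\in O^i_{\sigma^i(\y)}$ forces
\[
\sigma^i(\y)=\operatorname{sign}(a^i_j)\operatorname{sign}(y_j-q^i_j).
\]
This holds for \emph{every} $j$ with $y_j\neq q^i_j$. Hence $\sigma^i(\y)$ is determined by the first coordinate $j$ at which $\y$ and $\q^i$ differ, together with the side of $q^i_j$ on which $y_j$ lies. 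Membership in $\bigcap_i C^i$ is used only to guarantee that this choice of coordinate does not matter.

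\emph{Step 2: the prefix tree.} Level $k$ ($1\le k\le m$) has one node for each distinct prefix $(q^i_1,\ldots,q^i_{k-1})$, with the root at level $1$; so each level has at most $n$ nodes. Let $S$ be the set of indices sharing the prefix of a node $v$, and let $d_v$ be the number of distinct values of $q^i_k$ for $i\in S$. These values split the real line into $d_v$ points and $d_v+1$ open intervals. The point cells are the children of $v$ at level $k+1$, and the interval cells are terminal.

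Now trace $\y$ through the tree. At level $k$, $\y$ sits at the node whose prefix equals $(y_1,\ldots,y_{k-1})$, and $y_k$ falls in one cell. If the cell is an interval, $y_k\neq q^i_k$ for every $i\in S$, so by Step 1 all of $\sigma^i(\y)$, $i\in S$, are fixed by which interval it is. Indices outside $S$ were already resolved at an earlier level, where $y$ first differed from them. If the cell is a point, we descend. If we descend through all $m$ levels, then $\y=\q^i$ for some $i$, which is excluded. So every $\y$ counted in $\Omega$ ends at an interval leaf, and $\sigma(\y)$ is a function of that leaf alone. Therefore $|\Omega|$ is at most the number of interval leaves.

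\emph{Step 3: counting the leaves.} At level $k$, the number of interval leaves is
\[
\sum_v (d_v+1)\le \sum_v d_v+\#\{\text{nodes at level }k\}\le n+n=2n.
\]
Here $\sum_v d_v\le n$ because the index sets of the nodes at level $k$ are disjoint and each distinct value needs at least one index. Summing over the $m$ levels gives $|\Omega|\le 2mn\le 2(m+1)n$.

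The step needing the most care is Step 2's claim that the pattern is a function of the leaf. Two things must be checked: indices separated from $\y$ at an earlier level keep an orientation fixed by that earlier cell (this is exactly Step 1's independence of $j$), and indices in the current $S$ all share the side determined by the interval. Feasibility of the region $\mathcal{R}_\sigma$ is not needed, since we only need an upper bound.
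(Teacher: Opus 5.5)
Your proof is correct and follows essentially the same route as the paper: you build the answer (prefix) tree, note that each orientation is fixed by the first coordinate where $\y$ departs from $\q^i$, attach to every non-leaf node the at most $d(u)+1$ intervals between its child values, and count $\sum_u (d(u)+1)$. The differences are cosmetic: you work directly with $\sigma$ on $\bigcap_i C^i\setminus Q$ rather than through the paper's relaxation $\mathcal{T}$ built from $\tau$, and your level-by-level count gives the slightly sharper bound $2mn$.
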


\begin{proof}
First, we construct a relaxation $\mathcal{T}$ of $\Omega$. 
For $\y\in X\setminus Q$ and $i\in[n]$, let
\begin{equation*}
d(i,\y):=\min \bigl\{j\in[m] \bigm| y_j \neq q_j^i \bigr\}
\end{equation*}
be the first coordinate at which $\y$ and $\q^i$ differ, and define 
\begin{equation}\label{eq:tau}
\tau^i(\y)
:=
\operatorname{sign}\Bigl(
a^i_{d(i,\y)}
\bigl(y_{d(i,\y)}-q^i_{d(i,\y)}\bigr)
\Bigr).
\end{equation}
This indicator is well-defined because $a_j^i\neq0$ for every $i,j$ and
$y_{d(i,\y)}\neq q^i_{d(i,\y)}$. Now, let $\tau(\y):=(\tau^i(\y))_{i\in[n]}$, and define $\mathcal{T}:= \bigl\{\tau(\y) \bigm| \y \in X\setminus Q\bigr\}$.
We show $\Omega\subseteq\mathcal{T}$ by considering a point $\y$ in $X \setminus Q$ with $\y\in\bigcap_{i\in[n]}C^i$. Let $i \in [n]$.  If $\sigma^i(\y)=+$ then $a_j^i(y_j-q_j^i)\geq0$ for every $j$. At the first differing coordinate $d(i,\y)$, the inequality is strict, so
\begin{equation*}
\sigma^i(\y)= +=\operatorname{sign}\Bigl(
a^i_{d(i,\y)}
\bigl(y_{d(i,\y)}-q^i_{d(i,\y)}\bigr)
\Bigr)=\tau^i(\y).
\end{equation*}
The case $\sigma^i(\y)=-$ is analogous. Hence $\sigma(\y)=\tau(\y)\in\mathcal{T}$, proving $\Omega\subseteq\mathcal{T}$.

To count the orientations in $\mathcal{T}$, we organize the set of survey responses $Q$ by their coordinate prefixes into a data structure termed the \emph{answer tree} (or prefix tree). A coordinate prefix of length $k$ is given by $(q_1^i, \ldots, q_k^i)$. Each node in the tree corresponds to a distinct prefix and stores the subset of observations sharing that prefix. The root node (depth $0$) corresponds to the empty prefix and contains all observations $Q$.   For each non-leaf node $u$ at depth $k \ge 0$, its child nodes correspond to the distinct values of the $(k+1)$-th coordinate among observations in $u$. Each observed answer $\q^i \in Q$ thus traces a unique path from the root to a leaf at depth $m$. Figure~\ref{fig:prefix-tree-example} illustrates the answer tree for a survey with $3$ questions and $4$ rounds.

\begin{figure}[htbp]
  \centering
  \resizebox{\linewidth}{!}{%
    \begin{tikzpicture}[
      every node/.style={
        draw,
        rectangle,
        align=center,
        inner sep=3pt,
        font=\footnotesize
      },
      edge from parent/.style={
        draw,
        -{Latex[length=1.7mm]}
      },
      level 1/.style={
        sibling distance=58mm,
        level distance=16mm
      },
      level 2/.style={
        sibling distance=29mm,
        level distance=16mm
      },
      level 3/.style={
        level distance=16mm
      }
    ]
      \node {prefix: $()$\\ $\{\q^1,\q^2,\q^3,\q^4\}$}
        child {
          node {prefix: $(1)$\\ $\{\q^1,\q^2,\q^4\}$}
          child {
            node {prefix: $(1,2)$\\ $\{\q^1\}$}
            child {
              node {prefix: $(1,2,1)$\\ $\{\q^1\}$}
            }
          }
          child {
            node {prefix: $(1,5)$\\ $\{\q^2\}$}
            child {
              node {prefix: $(1,5,0)$\\ $\{\q^2\}$}
            }
          }
          child {
            node {prefix: $(1,7)$\\ $\{\q^4\}$}
            child {
              node {prefix: $(1,7,2)$\\ $\{\q^4\}$}
            }
          }
        }
        child {
          node {prefix: $(3)$\\ $\{\q^3\}$}
          child {
            node {prefix: $(3,1)$\\ $\{\q^3\}$}
            child {
              node {prefix: $(3,1,2)$\\ $\{\q^3\}$}
            }
          }
        };

      \node[
        draw=none,
        anchor=east,
        inner sep=0pt,
        font=\small
      ] at (-80mm,-24mm) {
        $\begin{aligned}
          \q^1&=(1,2,1),\\
          \q^2&=(1,5,0),\\
          \q^3&=(3,1,2),\\
          \q^4&=(1,7,2).
        \end{aligned}$
      };
    \end{tikzpicture}%
  }
  \caption{An answer tree for $m=3$ questions and $n=4$ rounds. Each node represents a coordinate prefix and contains the answers sharing it; each survey answer corresponds to a unique root-to-leaf path.}
  \label{fig:prefix-tree-example}
\end{figure}

Next, we introduce the notion of \emph{configuration pair} to encode the orientation patterns of $\mathcal{T}$ into the tree. Consider a non-leaf node $u$ at depth $k$. Let $\pi(u)$ denote its prefix and let $N(u)$ denote the index set of observations contained in $u$. Let the distinct $(k+1)^{\text{st}}$ coordinates of the children of $u$ be $v_1<\cdots<v_{d(u)}$, where $d(u)$ denotes the number of children of $u$ and these values are contained in $[0,M_{k+1}]$. The complement $[0,M_{k+1}]\setminus\{v_1,\ldots,v_{d(u)}\}$
is partitioned into at most $d(u)+1$ intervals. A \emph{configuration pair} is a pair $(u,I)$, where $u$ is a non-leaf node and $I$ is one of these intervals. Intuitively, $(u,I)$ records a set of peaks whose first $k$ coordinates coincide with the prefix of $u$ and whose $(k+1)^{\text{st}}$ coordinate lies in $I$. Formally, define
\begin{equation}\label{eq:confi_region}
X(u,I):=\bigl\{\y \in X  \bigm| (y_1,y_2, \ldots, y_k) = \pi(u),\ y_{k+1} \in I\bigr\}.    
\end{equation}

Indeed, we can show that each orientation pattern of $\mathcal{T}$ can be represented by at least one configuration pair. For a configuration pair $(u,I)$, we show that $\tau(\y)$ is constant on  the identified region $X(u,I)$, generating   an orientation pattern of $\mathcal{T}$. In particular, for two distinct peaks $\y$ and $\y'$ in $X(u,I)$, we show that for every $ i \in [n]$, $\tau^i(\y) = \tau^i(\y')$. First, assume that $i\notin N(u)$. Then $\q^i$ differs from the prefix $\pi(u)$ in at least one of the first $k$ coordinates. Since $\y$ and $\y'$ both have prefix $\pi(u)$, there exists some $r\le k$ such that $y_r=y'_r=\pi_r(u)\neq q_r^i$, and, thus, $d(i,\y) = d(i,\y') \leq k$. This implies $\tau^i(\y) = \tau^i(\y')$. Now, consider $i \in N(u)$. Then, $y_r = y'_r = q^i_r$  for $r \leq k$, while both $y_{k+1}$ and $y'_{k+1}$ differ from all child coordinate values $v_1, \ldots,v_{d(u)}$. Thus, $d(i,\y)=d(i,\y') = k+1$. This, together with $y_{k+1}, y'_{k+1} \in I$, shows $\tau^i(\y) = \tau^i(\y')$.  Conversely, consider any $\tau(\y)\in\mathcal{T}$. Since $\y\neq \q^i$ for every $i\in[n]$, $\y$ cannot match a full prefix at depth $m$. Tracing $\y$ down the tree, let $u$ be the unique deepest node, at some depth $k\leq m-1$, whose prefix matches the first $k$ coordinates of $\y$. By maximality, $y_{k+1}\notin\{v_1,\ldots,v_{d(u)}\}$, so $y_{k+1}$ lies in one of the intervals $I$ associated with $u$. Hence the configuration pair $(u,I)$ generates $\tau(\y)$. 

Now, we are ready to finish the counting. Let $T$ be the number of configuration pairs and let $N_{\mathrm{tree}}$ be the number of nodes in the prefix tree. It turns out that 
\begin{equation*}
\begin{aligned}
\vert \Omega \vert& \leq \vert \mathcal{T} \vert \leq T \leq \sum_{u \text{ is non-leaf node}}\bigl(d(u)+1\bigr)\\ 
&\leq(N_{\mathrm{tree}}-1)+N_{\mathrm{tree}}<2N_{\mathrm{tree}}\leq2(m+1)n,  
\end{aligned}
\end{equation*}
Here, $\Omega\subseteq\mathcal{T}$ gives the first inequality. The second inequality follows since each pattern of $\mathcal{T}$ is generated by at least one configuration pair. The third inequality holds since each non-leaf node is associated with $d(u)$ children and thus with at most $d(u) + 1$ intervals. $\sum_u d(u)=N_{\mathrm{tree}}-1$ gives the fourth inequality. The last inequality holds since the tree has depths $0,\ldots,m$, with at most $n$ nodes at each depth. \qed
\end{proof}
The proof of Lemma~\ref{lem:patterns} naturally yields an efficient \emph{enumeration procedure} for listing all elements of $\Omega$:
\begin{enumerate}
    \item Construct the answer tree from $Q$ using trie insertion in $\mathcal{O}(mn \log n)$ time \citep{fredkin1960trie, knuth1998art};
    \item For each pair $(u, I)$, pick an arbitrary sample point $\y \in X(u,I)$ defined as in \eqref{eq:confi_region}, and evaluate the induced pattern $\sigma = \tau(\y)$ via \eqref{eq:tau};
    \item Filter candidate patterns using the membership test \eqref{eq:mem-test}: evaluate the axis-aligned box $\mathcal{R}_\sigma$, and retain $\sigma$ in $\Omega$ if and only if $\mathcal{R}_\sigma \setminus Q \neq \varnothing$.
\end{enumerate}
This membership test can be verified in polynomial-time:  $\mathcal{R}_\sigma$ is an axis-aligned box and can be obtained by coordinate-wise computation. An empty box fails the test, while a nonempty box with a positive-length side necessarily contains a point outside the finite set $Q$; if the box is a singleton, we simply check whether its unique point belongs to $Q$. 

Since every orientation in $\Omega$ is generated by at least one configuration pair, after screening all such pairs, the procedure enumerates exactly the elements of $\Omega$. Since the number of configuration pairs is at most $2(m+1)n$ by Lemma~\ref{lem:patterns}, all membership tests can be performed in polynomial time. Thus, the overall running time of the enumeration procedure is polynomial in $m$ and $n$.

\begin{algorithm}[!htb]
\caption{Polynomial-time test of peak-oriented rationalizability}
\label{alg:construct-omega}
\begin{algorithmic}[1]
\REQUIRE Survey dataset $D$ 
\STATE $\Omega \gets$ enumeration procedure applied to $D$ \label{alg:construct-omega-1}
\FOR{$\sigma \in \Omega$}\label{alg:construct-omega-2}
\IF{$\{(\q^i,\sigma^i \a^i)\}_{i\in[n]}$ satisfies~\eqref{eq:GARP}}\label{alg:construct-omega-3}
\STATE Choose $\y^* \in \mathcal{R}_\sigma \setminus Q $\label{alg:construct-omega-4}
    \RETURN  $D$ is peak-oriented rationalizable with peak $\y^*$ \label{alg:construct-omega-5}
\ENDIF \label{alg:construct-omega-6}
\ENDFOR \label{alg:construct-omega-7}

\FOR{each $\y\in Q$} \label{alg:construct-omega-8}
    \STATE $I\gets\{i\in[n]\mid \q^i\neq \y\}$ \label{alg:construct-omega-9}
    \IF{$\y\in C^i$ for every $i\in I$} \label{alg:construct-omega-10}
        \IF{$\{(\q^i,\sigma^i(\y)\a^i)\}_{i\in I}$ satisfies~\eqref{eq:GARP}} \label{alg:construct-omega-11}
    \RETURN  $D$ is peak-oriented rationalizable with peak $\y$  \label{alg:construct-omega-12}
        \ENDIF \label{alg:construct-omega-13}
    \ENDIF \label{alg:construct-omega-14}
\ENDFOR \label{alg:construct-omega-15}
\RETURN $D$ is not peak-oriented rationalizable \label{alg:construct-omega-16}
\end{algorithmic}
\end{algorithm}

Algorithm~\ref{alg:construct-omega} presents the complete rationalizability test. It exhaustively partitions candidate peaks into two cases: peaks lying strictly outside $Q$ (Lines~\ref{alg:construct-omega-1}--\ref{alg:construct-omega-7}) and peaks coinciding with an observed answer in $Q$ (Lines~\ref{alg:construct-omega-8}--\ref{alg:construct-omega-15}).  In the first case, the algorithm enumerates $\Omega$ and tests~\eqref{eq:GARP} for the corresponding reoriented dataset for each $\sigma\in\Omega$. By Theorem~\ref{them:peak-garp}, a successful test yields a valid peak in $\mathcal{R}_\sigma\setminus Q$. In the second case, the algorithm considers every $\y\in Q$, checks the corresponding cone-membership conditions, and applies the GARP test to the observations with $\q^i\neq\y$. These two cases exhaust all possible locations of the peak. Thus, if both cases fail, $D$ is not peak-oriented rationalizable. By Lemma~\ref{lem:patterns}, the enumeration procedure generates only polynomially many orientation patterns in polynomial time. Since each GARP and cone-membership check is also polynomial-time, Algorithm~\ref{alg:construct-omega} runs in polynomial time. We thus obtain the following theorem.

\begin{theorem}\label{thm:polynomial}
Given a survey dataset $D$ with $m$ survey questions and $n$ rounds, Algorithm~\ref{alg:construct-omega} determines whether $D$ is peak-oriented rationalizable in time polynomial in $m$ and $n$.
\end{theorem}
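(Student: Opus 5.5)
The argument splits into two parts: correctness of Algorithm~\ref{alg:construct-omega}, that is, it answers ``rationalizable'' exactly when $D$ is peak-oriented rationalizable, and a polynomial bound on its running time. Both reduce to Theorem~\ref{them:peak-garp} together with Lemma~\ref{lem:patterns} and the enumeration procedure. For correctness, I will partition the candidate peaks $\y\in X$ into $\y\notin Q$ and $\y\in Q$, and show that the two loops of the algorithm decide exactly the existence of a $\y$ in each class satisfying conditions (a) and (b) of Theorem~\ref{them:peak-garp}.

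\emph{Case $\y\notin Q$.} Here $I(\y)=[n]$, and condition (a) says $\y\in\bigcap_i C^i$. So $\sigma(\y)\in\Omega$, and $\y\in\mathcal{R}_{\sigma(\y)}\setminus Q$.
\begin{itemize}
\item[] \emph{Completeness.} I must show the enumeration procedure lists all of $\Omega$. By the proof of Lemma~\ref{lem:patterns}, every $\sigma\in\Omega$ equals $\tau(\y)$ for some $\y\in X\setminus Q$, and every such $\tau(\y)$ equals $\tau(\y')$ for any sample point $\y'$ of the configuration pair containing $\y$. Hence every element of $\Omega$ appears as a candidate, and the filter~\eqref{eq:mem-test} keeps it. Consequently, if some $\y\notin Q$ satisfies (a) and (b), the first loop sees $\sigma(\y)$, the GARP test on $\{(\q^i,\sigma^i\a^i)\}$ succeeds, and the algorithm accepts.
\item[] \emph{Soundness.} If the loop accepts some $\sigma\in\Omega$, then any $\y^*\in\mathcal{R}_\sigma\setminus Q$ (nonempty by the filter) satisfies $\y^*\in\bigcap_i O^i_{\sigma^i}\subseteq\bigcap_i C^i$, and $\sigma(\y^*)=\sigma$ because $O^i_+\cap O^i_-=\{\q^i\}$ and $\y^*\neq\q^i$. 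Theorem~\ref{them:peak-garp} then certifies rationalizability with peak $\y^*$.
\end{itemize}

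\emph{Case $\y\in Q$.} The second loop checks (a) and (b) of Theorem~\ref{them:peak-garp} literally, for each of the finitely many such $\y$. So it accepts if and only if some $\y\in Q$ works. Combining the two cases, the algorithm rejects only when no $\y\in X$ satisfies the theorem, which proves correctness.

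For the running time, I count the operations in each stage.
\begin{itemize}
\item[] \emph{Trie.} Building the answer tree takes $\mathcal{O}(mn\log n)$ time.
\item[] \emph{Candidates.} There are at most $2(m+1)n$ configuration pairs. For each, choosing a sample point (for instance, an interval midpoint plus arbitrary remaining coordinates) and computing $\tau$ costs $\mathcal{O}(mn)$.
\item[] \emph{Membership filter.} $\mathcal{R}_\sigma$ is a product of intervals $[\ell_j,h_j]$ with $\ell_j=\max$ and $h_j=\min$ over the $n$ constraints intersected with $[0,M_j]$, so it is computed in $\mathcal{O}(mn)$. Deciding $\mathcal{R}_\sigma\setminus Q\neq\varnothing$ is immediate when some side has positive length, and otherwise requires an $\mathcal{O}(mn)$ comparison against $Q$.
\item[] \emph{GARP tests.} Each GARP test on at most $n$ observations runs in polynomial time (e.g.\ $\mathcal{O}(n^2m+n^3)$ via Varian's transitive-closure method). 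The first loop performs at most $|\Omega|\le 2(m+1)n$ of them and the second loop at most $n$, the latter each with an $\mathcal{O}(mn)$ cone check.
\end{itemize}
Summing gives a polynomial in $m$ and $n$. One point needs care: the arithmetic is on the input reals (or rationals), and I will assume a unit-cost or bit-polynomial model, as is standard for GARP tests.

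The main obstacle is the completeness claim in the case $\y\notin Q$: that sampling a \emph{single} point per configuration pair recovers every element of $\Omega$. The difficulty is that $\tau$ is only a relaxation of $\sigma$, and the sample point may itself lie outside $\bigcap_i C^i$. I will resolve this by observing that the algorithm never uses the sample point as a peak. It uses only the pattern $\tau$, which is constant on $X(u,I)$ by Lemma~\ref{lem:patterns}'s proof. The actual peak is drawn afterward from $\mathcal{R}_\sigma\setminus Q$, whose nonemptiness is exactly what~\eqref{eq:mem-test} certifies.
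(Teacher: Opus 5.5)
Your proposal is correct and follows essentially the same route as the paper: you split candidate peaks into $\y\notin Q$ and $\y\in Q$, invoke Theorem~\ref{them:peak-garp} in each case, and use Lemma~\ref{lem:patterns} with the enumeration procedure (constancy of $\tau$ on each $X(u,I)$ plus the filter~\eqref{eq:mem-test}) for both completeness and the polynomial bound. Your version simply makes the soundness and completeness of the sampled-pattern step, and the per-stage operation counts, more explicit than the paper does.
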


In Algorithm~\ref{alg:construct-omega}, once a rationalizing peak $\y^*$ is found, a utility function can be constructed from an oriented Afriat system~\citep[Proposition 2]{seror2026concaverationalizationidealpoint}. More specifically,  find $U^0$, $U^i$, and $\eta^i>0$ for $i \in I(\y^*)$ satisfying
\begin{equation}\label{eq:afriat}
\begin{aligned}
& U^k \leq U^i+  \eta^i \bigl\langle \sigma^i(\y^*) \a^i, \q^k-\q^i \bigr\rangle \quad && \text{ for } i,k\in I(\y^*),\\
& U^i \leq U^0\leq U^i+ \eta^i\bigl\langle \sigma^i(\y^*) \a^i, \y^*-\q^i \bigr\rangle && \text{ for } i\in I(\y^*).
\end{aligned}
\end{equation}
For any such solution, a utility function $u: X \to \mathbb{R}$ that yields a peak-oriented rationalization of $D$ is given by
\begin{equation}\label{eq:utility}
u(\x):=\min\left\{U^0,\ \min_{i\in I(\y^*)}\Bigl[U^i+\eta^i \bigl\langle \sigma^i(\y^*) \a^i, \x- \q^i \bigr\rangle \Bigr]\right\}.
\end{equation}

\begin{corollary}\label{cor:utility-construction}
If a survey dataset $D$ is peak-oriented rationalizable, a rationalizing utility function $u(\x)$ can be obtained in time polynomial in $m$ and $n$.
\end{corollary}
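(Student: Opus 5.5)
The plan is to chain three polynomial-time steps: find a peak with Algorithm~\ref{alg:construct-omega}, find a solution of the oriented Afriat system \eqref{eq:afriat} by linear programming, and assemble $u$ via \eqref{eq:utility}. The correctness of the resulting $u$ is taken from Seror's Proposition~2, which we may assume.

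\textbf{Step 1: the peak and the orientation.} By Theorem~\ref{thm:polynomial}, Algorithm~\ref{alg:construct-omega} terminates in polynomial time. When $D$ is rationalizable, it returns a peak $\y^*$ together with the pattern $\sigma^i(\y^*)$ for $i\in I(\y^*)$, and the retained oriented data satisfy \eqref{eq:GARP}.
\begin{itemize}
\item In the first loop, $\y^*$ must be a concrete point of the box $\mathcal{R}_\sigma\setminus Q$. If the box has a side of positive length, I would take its center, perturbed along that side if the center lies in $Q$. Because $Q$ is finite, at most $n+1$ candidate points along that side suffice, so this takes polynomial time. If the box is a singleton, its unique point already passed the membership test.
\item In the second loop, $\y^*\in Q$ is explicit.
\end{itemize}
Either way, $\y^*$ has polynomial encoding size, since its coordinates are midpoints of the data coordinates or the data coordinates themselves.

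\textbf{Step 2: the Afriat system.} By Theorem~\ref{them:peak-garp} and Seror's Proposition~2, a peak satisfying (a)--(b) guarantees that \eqref{eq:afriat} is feasible with $\eta^i>0$. This system is linear in the unknowns $(U^0,U^i,\eta^i)$: it has $|I(\y^*)|+1\le n+1$ level variables, at most $n$ multipliers, and $O(n^2)$ constraints, and each coefficient is an inner product computable in $O(m)$ operations. The strict constraints $\eta^i>0$ can be replaced by $\eta^i\ge 1$ without loss of feasibility. The whole system is homogeneous under the scaling $(U,\eta)\mapsto(cU,c\eta)$ for $c>0$, so any feasible solution can be rescaled until every $\eta^i\ge1$. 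The resulting LP is solved in polynomial time by the ellipsoid or an interior-point method (Khachiyan), and it returns a rational solution of polynomial size.

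\textbf{Step 3: the utility function.} With this solution, \eqref{eq:utility} is a minimum of at most $n+1$ affine functions, and writing them down costs $O(mn)$ operations. By Seror's Proposition~2, this $u$ rationalizes $D$ with peak $\y^*$. The total running time is the sum of three polynomial bounds.

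The main obstacle is checking that Step~2 is genuinely polynomial. This needs the homogeneity argument that removes the strict inequalities $\eta^i>0$, and it needs the coefficient bit-lengths to be polynomial. The latter is why Step~1 must choose the peak $\y^*$ with controlled encoding size rather than taking an arbitrary point.
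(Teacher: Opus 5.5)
Your proposal is correct and follows essentially the same route as the paper: obtain the peak from Algorithm~\ref{alg:construct-omega}, use the homogeneity of \eqref{eq:afriat} to replace $\eta^i>0$ by $\eta^i\ge 1$, solve the resulting polynomial-size linear program, and substitute the solution into \eqref{eq:utility}. Your choice of a peak $\y^*$ with polynomial encoding size is a detail the paper leaves implicit, and the case $I(\y^*)=\varnothing$, which the paper handles separately with a constant utility, is covered in your version by $u\equiv U^0$.
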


\begin{proof}
Since $D$ is peak-oriented rationalizable, apply Algorithm~\ref{alg:construct-omega} to $D$ and it returns a rationalizing peak $\y^*$. Let $I:=I(\y^*)$. If $I=\varnothing$, the constant utility function $u(\x) =  0$ suffices. Otherwise, the system~\eqref{eq:afriat} admits a solution with $\eta^i>0$ for every $i\in I$.

We show that the system \eqref{eq:afriat} is feasible with $\eta^i>0$ for every $i\in I$ if and only if it is feasible with $\eta^i\geq1$ for every $i\in I$. For the ``only if'' direction, take a feasible solution with $\eta^i>0$ for every $i\in I$ and let $\delta:=\min_{i\in I}\eta^i>0$. Define
\begin{equation*}
\widehat U^0:=\frac{U^0}{\delta},\quad
\widehat U^i:=\frac{U^i}{\delta},\quad
\widehat\eta^i:=\frac{\eta^i}{\delta}
\quad\text{for }i\in I.
\end{equation*}
Since all inequalities in~\eqref{eq:afriat} are homogeneous in $U^0$, $U^i$, and $\eta^i$ for every $i\in I$, dividing both sides by $\delta$ shows that the rescaled variables also satisfy~\eqref{eq:afriat}. Moreover, the definition of $\delta$ gives $\widehat\eta^i\geq1$ for every $i\in I$. Therefore, the system with $\eta^i\geq1$ for every $i\in I$ is feasible. For the reverse direction, since $\eta^i\geq1$ implies $\eta^i>0$, any feasible solution under the former constraints is also feasible under the latter.

Consequently, finding a valid parameter set, $U^0,U^i$ and  $\eta^i$, reduces to solving a standard linear program over system~\eqref{eq:afriat} augmented with the linear constraints $\eta^i \ge 1$ for $i \in I$. Since this linear program has polynomial size in $m$ and $n$, a feasible solution can be computed in polynomial time via standard linear programming algorithms, \textit{e.g.}, the interior-point algorithm~\cite{karmarkar1984new}. Substituting these parameters into~\eqref{eq:utility} yields the desired utility function $u(\x)$.\qed

\end{proof}




\section{The Peak-Oriented Houtman--Maks Index}\label{hardness}
The previous section shows that peak-oriented rationalizability can be tested in polynomial time, and that a rationalizing utility function can be constructed whenever the test succeeds. In survey analysis, however, a respondent's answers may fail exact rationalizability because only a small number of rounds are inconsistent with a common peak. A binary True/False test does not distinguish such cases from datasets with widespread violations. 

Following the Houtman--Maks approach to measuring goodness of fit~\citep{houtman1985determining}, \citet{seror2026concaverationalizationidealpoint} therefore considers a consistency index for peak-oriented rationalization, measuring the largest fraction of survey observations that can be rationalized by a common peak, that is $\HM_{\mathrm{Peak}}(D)/\vert D \vert$, where 
\begin{equation*}
\mathrm{HM}_{\mathrm{Peak}}(D):=
\max\bigl\{\vert S \vert  \bigm| 
S\subseteq D \text{ is peak-oriented rationalizable}\bigr\}.
\end{equation*}
The index therefore provides a quantitative measure of how closely a respondent's survey data conform to peak-oriented rationality, when the full dataset fails the exact rationalization test. 

This raises a natural computational question: does the polynomial-time test for exact rationalizability extend to computing this goodness-of-fit measure? The answer is negative.

\begin{theorem}\label{thm:nphard}
Given a survey dataset $D$, computing  $\mathrm{HM}_{\mathrm{Peak}}(D) $ is NP-Hard.
\end{theorem}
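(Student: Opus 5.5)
We will prove NP-hardness by a polynomial-time reduction from a known NP-hard problem. The natural source is the classical Houtman--Maks problem for GARP, which is NP-hard \citep{smeulders2019algorithmic}. However, embedding GARP cycles requires controlling the peak orientation, which is delicate. A cleaner alternative, which we adopt, makes the orientation step itself the source of hardness: reduce from \textsc{Maximum 2-Satisfiability} (or \textsc{Max-Cut}). The idea is that choosing a peak $\y$ amounts to choosing, coordinate by coordinate, a side relative to each observation, and the double-cone condition (a) of Theorem~\ref{them:peak-garp} forces sign-consistency across coordinates, just like a clause.

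The construction is as follows. Given a 2-CNF formula with variables $x_1,\ldots,x_m$ and clauses $c_1,\ldots,c_n$, and a threshold $k$, we create a survey with $m$ questions, $X=[0,2]^m$. Let $\boldsymbol{1}$ denote the all-ones vector (a new macro-free shorthand; in the writeup we write it out), so the center is $(1,\ldots,1)$. The intended peak has $y_j>1$ if $x_j$ is true and $y_j<1$ if false. For a clause $c_i$ on variables $j,l$ we place $\q^i$ at the center, $q^i_r=1$ for all $r$. We choose $\a^i$ with $a^i_r$ of large magnitude on coordinates $j,l$, with signs encoding the literals, so that the budget hyperplane through the center meets the interior. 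Membership $\y\in C^i$ then requires $\operatorname{sign}(a^i_r(y_r-1))$ to be constant over all $r$. Since every coordinate of $\y$ differs from $1$ for a clean peak, this constrains all $m$ coordinates and not just the clause's two, which is too strong. We therefore have to move the non-clause coordinates of $\q^i$ to the boundary. Setting $q^i_r\in\{0,2\}$ for $r\notin\{j,l\}$, with the sign of $a^i_r$ chosen so that both orientations are compatible for these coordinates, is impossible because a single coordinate cannot satisfy both. The fix is to use two observations per clause, one for each orientation, or to let $q^i_r=y_r$ on irrelevant coordinates through an equality-forcing gadget. This is the main obstacle: getting a gadget where the double cone restricts only the two clause coordinates. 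We will first try taking observation $i$ with irrelevant coordinates equal to the endpoint value $0$ or $2$ and encoding the literal pattern via $\{+,-\}$ orientation choices. We then count the satisfied clauses as those observations whose cone contains the peak, with each unsatisfied clause losing exactly one observation out of a fixed-size gadget.

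Next we must neutralize GARP, condition (b). We place all retained $\q^i$ so that every revealed-preference inequality $\langle \sigma^i\a^i,\q^i-\q^k\rangle\ge 0$ fails strictly for $i\neq k$. One option is to rescale the $\a^i$ so that the gadgets are spread in distinct directions and no cycle can form. Alternatively we keep the orientations such that each $\sigma^i\a^i$ points toward the peak, and verify directly that the Afriat system \eqref{eq:afriat} is feasible. Once this is done, $\HM_{\mathrm{Peak}}(D)$ equals the number of clauses satisfiable simultaneously plus a fixed offset. This holds in both directions: a truth assignment yields a peak, and conversely a peak's coordinate signs relative to $1$ yield an assignment. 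Peaks lying in $Q$ also have to be handled, which we do by a small perturbation argument. The construction is polynomial in the size of the formula, and the NP-hardness of \textsc{Max-2-Sat} then gives the result.
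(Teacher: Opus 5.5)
\textbf{There is a genuine gap: the proposal does not yet contain a reduction.} The gadget everything rests on is left open. You observe that a coordinate placed at an endpoint is non-binding in one orthant but pinned in the other, so it cannot be ``irrelevant'' for both. You then say you will first try exactly that placement.

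There is also a mismatch with the source problem. On the two clause coordinates $j,l$, a double cone $C^i$ admits only the two opposite sign patterns $(+,+)$ and $(-,-)$ of $\operatorname{sign}(a^i_r(y_r-q^i_r))$. If you make the remaining coordinates non-binding in $O^i_+$, then $O^i_-$ pins them to endpoints, leaving a single pattern. A 2-clause, by contrast, is satisfied by three of the four patterns. So no single observation encodes a clause; the cone naturally expresses agreement constraints, closer to Max-Cut than to Max-2-SAT. The claimed identity ``$\HM_{\mathrm{Peak}}(D)$ equals the maximum number of satisfiable clauses plus an offset'' is therefore unsupported.

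The GARP neutralization is likewise only gestured at. The degenerate peaks are not a technicality either. In your first draft every $\q^i$ equals $(1,\ldots,1)$, so the peak $\y=(1,\ldots,1)$ gives $I(\y)=\varnothing$ in Theorem~\ref{them:peak-garp}, and the whole dataset is rationalizable whatever the formula. No perturbation argument removes such a peak; the construction itself must control it.

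The route you set aside as delicate is the one the paper uses, and the orientation control has a simple fix. Reduce from the GARP Houtman--Maks decision problem, which is NP-complete. Embed $E$ as $\q^i=(e_i,0)\in[0,1]^{T+1}$ with $\a^i=(\alpha^i_1,\ldots,\alpha^i_T,1)$ and $\alpha^i_j=1+\tfrac12\operatorname{sign}\langle\p^i,\x^j-\x^i\rangle$. Then $\operatorname{sign}\langle\a^i,\q^k-\q^i\rangle=\operatorname{sign}\langle\p^i,\x^k-\x^i\rangle$, and GARP depends only on these signs. Next add $T+1$ identical rounds at $\mathbf 1_{T+1}$ with normal $(\mathbf 1_T,-1)$.
\begin{itemize}
\item An optimal subset must retain at least one of these copies, since otherwise it has at most $T<\HM(E)+T+1$ elements. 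This forces $\y^*\in C^{T+1}$.
\item Since $C^{T+1}\cap O^i_-=\varnothing$ for $T\ge 2$, every retained encoding round is oriented $+$. The oriented GARP test on these rounds is then exactly GARP on the corresponding subset of $E$.
\item Conversely, the peak $\mathbf 1_{T+1}$ lies in every $O^i_+$ and coincides with the $T+1$ copies, which are thereby exempt from the test.
\end{itemize}
Together these give $\HM_{\mathrm{Peak}}(D)=\HM(E)+T+1$. This construction exploits precisely the peak-in-$Q$ phenomenon you planned to perturb away.
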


\begin{proof}
We prove the result by reducing from the Houtman--Maks problem \citep{houtman1985determining} in consumer theory, defined as follows. Let  $E= \bigl\{(\x^t,\p^t) \bigr\}_{t\in[T]}$ be a set of consumption bundles and  prices vectors, with $T \ge 2$. The Houtman--Maks index of $E$ is defined as $ \HM(E)/ \vert E \vert$, where
\[
\HM(E):= \max\bigl\{ \vert H \vert \bigm| H \subseteq E \text{ satisfies GARP} \bigr\}.
\]
Given  a consumption dataset $E$  and a constant $L$, deciding whether $\HM(E) \geq L$ is NP-complete~\citep[Theorem 5.1]{smeulders2014goodness}. 

Given a consumption dataset $E$, we construct a survey dataset $D$ as follows:
\begin{itemize}
	\item The number of questions  is $T+1$, and the set of all possible answers is  $X = [0,1]^{T+1}$.
	\item The number of rounds is $T + (T+1)$. For each round $i$ of the first $T$ rounds, the answer $\q^i = (e_i,0)$, the budget normal vector $\a^i = (\alpha^i_1, \ldots, \alpha^i_T,1)$, where $e_i$ is the $i$-th unit vector in $\mathbb{R}^T$ and 
\[
\alpha_j^i:=1+\tfrac12\operatorname{sign}\! \left( \langle \p^i ,\x^j-\x^i \rangle \right) \in  \left\{\tfrac12,1,\tfrac32\right\}.
\]
For each round $i \in \{T+1, \ldots, 2T+1\}$, the answer $\q^i = \mathbf{1}_{T+1}$ and  $\a^i = (\mathbf{1}_{T},-1)$.	
\item For $i \in  [2T+1]$, let $B^i:= \bigl\{\x \in X \bigm| \langle \a^i, \x \rangle = \langle \a^i,\q^i\rangle   \bigr\}$. Then, the survey dataset $D := \{(\q^i, B^i)\}_{i \in [2T+1]}$.
\end{itemize}
This construction has three useful properties. First, $D$ satisfies assumptions that all coordinates of $\a^i$ are nonzero, $B^i$ intersects the interior of $X$, and $\q^i\in B^i$. Second, 
we obtain the following sign equality
\begin{equation}\label{eq:sign-iso}
\operatorname{sign}\! \left( \langle \a^i,  \q^k- \q^i \rangle \right) =\operatorname{sign}\! \left( \langle \p^i , \x^k- \x^i \rangle \right) \quad \text{ for } i,k\in[T].
\tag{$\ast$}
\end{equation}
This holds because  $\langle \a^i, \q^k-\q^i \rangle = \alpha^i_k - \alpha^i_i = \frac{1}{2}\operatorname{sign}\bigl(\langle \p^i,\x^k - \x^i \rangle \bigr)$. Last, for each \(i\in[T]\), since $\q^i=(e_i,0)$ and every component of \(\a^i\) is positive, we have
\[
\begin{aligned}
O_+^i &=\bigl\{\y\in[0,1]^{T+1}\bigm| y_i=1\bigr\},\\
O_-^i
&=\bigl\{\y\in[0,1]^{T+1} \bigm|
y_j=0\ \text{for all }j\in[T]\setminus\{i\},
\ y_{T+1}=0\bigr\}.
\end{aligned}
\]
For each $i\in\{T+1,\ldots,2T+1\}$, since $\q^i=\mathbf 1_{T+1}$ and the first $T$ components of $\a^i$ are positive while its last component is negative, we have
\[
\begin{aligned}
O_+^i
&=\bigl\{\y\in[0,1]^{T+1} \bigm|
y_j=1\ \text{for all }j\in[T]\bigr\},\\
O_-^i
&=\bigl\{\y\in[0,1]^{T+1}\bigm| y_{T+1}=1\bigr\}.
\end{aligned}
\]

We now show that $\HM_{\mathrm{Peak}}(D)=\HM(E)+T+1$.  It follows that an algorithm computing
$\HM_{\mathrm{Peak}}(D)$ would allow us to compute
$\HM(E)$ by subtracting $T+1$. Equivalently, for any
integer $L$, $\HM(E)\ge L$ if and only if $\HM_{\mathrm{Peak}}(D)\ge L+T+1$.  The construction is computable in polynomial time, so a polynomial-time algorithm for computing \(\HM_{\mathrm{Peak}}(D)\) would solve the NP-complete Houtman--Maks decision problem in polynomial time. Hence computing \(\HM_{\mathrm{Peak}}(D)\) is NP-Hard.

\paragraph{Lower bound.}
We first prove that $\HM_{\mathrm{Peak}}(D)\ge\HM(E)+T+1$.
Let $I\subseteq[T]$ be an index set such that
$\{(\x^t,\p^t)\}_{t\in I}$ satisfies GARP and
$|I|=\HM(E)$. We show that the survey data subset
\[
D_I:=
\bigl\{(\q^i,\a^i) \bigm| i\in I\cup\{T+1,\ldots,2T+1\}\bigr\}
\]
is peak-oriented rationalizable with peak $\y^*=\mathbf 1_{T+1}$.

By construction, $I(\y^*)=[T]$, and $\y^*\in O_+^i$ for every $i\in[T]$. Thus, condition (a) of Theorem~\ref{them:peak-garp} is satisfied. Moreover, for all $i,k\in[T]$,
\[
\operatorname{sign}\bigl(
\langle\sigma^i(\y^*)\a^i,\q^k-\q^i\rangle\bigr)=
\operatorname{sign}\bigl(
\langle\a^i,\q^k-\q^i\rangle\bigr) =
\operatorname{sign}\bigl(
\langle\p^i,\x^k-\x^i\rangle\bigr),
\]
where the first equality follows from
$\sigma^i(\y^*)=+$, and the second follows from
\eqref{eq:sign-iso}. Hence, since
$\{(\x^t,\p^t)\}_{t\in I}$ satisfies GARP, the corresponding oriented survey dataset
\[
\bigl\{(\sigma^i(\y^*)\a^i,\q^i)\bigr\}_{i\in I}
\]
also satisfies GARP. The remaining $T+1$ observations are located at the peak $\y^*$ and therefore do not enter the oriented GARP test. Thus, condition (b) of Theorem~\ref{them:peak-garp} is also satisfied, so $D_I$ is peak-oriented rationalizable. By the definition of $\HM_{\mathrm{Peak}}(D)$, it follows that
\[
\HM_{\mathrm{Peak}}(D)
\ge |D_I| = |I|+T+1
=\HM(E)+T+1.
\]

\paragraph{Upper bound.}
We next prove that $\HM_{\mathrm{Peak}}(D)\le\HM(E)+T+1$. Let \(I\subseteq[2T+1]\) such that $\bigl\{(\q^i,\a^i)\bigr\}_{i\in I}$
is peak-oriented rationalizable with peak $\y^*$, and
$|I|=\HM_{\mathrm{Peak}}(D)$. Define a partition of $I$
 \[
I':=I\cap[T] \quad \text{ and } \quad
I'':=I\cap\{T+1,\ldots,2T+1\}.
\]
We first show that $\y^*\in C^{T+1}$. Suppose, to the contrary, that $\y^*\notin C^{T+1}$. Then none of the observations indexed by $\{T+1,\ldots,2T+1\}$ can satisfy condition (a) of Theorem~\ref{them:peak-garp}. Consequently, $I''=\varnothing$, and hence $|I|\le T$. Since $|I|=\HM_{\mathrm{Peak}}(D)$, this contradicts the lower bound $\HM_{\mathrm{Peak}}(D)\ge\HM(E)+T+1\ge T+1$. Therefore, $\y^*\in C^{T+1}$.

For every $i\in  I' \subseteq [T]$, condition (a) implies that $\y^*\in C^i = O^i_+ \cup O^i_-$. Since $T \ge 2$, we have $C^{T+1}\cap O_-^i=\varnothing$. It follows that $\y^*\in O_+^i \setminus \{\q^i\}$, and thus $\sigma^i(\y^*)=+$ for $i \in I'$. For every $i,k\in I'$, the equality~\eqref{eq:sign-iso}  and the positivity of $\sigma^i(\y^*)$ shows that   
\[
\begin{aligned}
\operatorname{sign}\bigl(
\langle\p^i,\x^k-\x^i\rangle\bigr)
&=\operatorname{sign}\bigl(
\langle\a^i,\q^k-\q^i\rangle\bigr)\\
&=\operatorname{sign}\bigl(
\langle\sigma^i(\y^*)\a^i,\q^k-\q^i\rangle\bigr),
\end{aligned}
\]
 Because the oriented survey data $\bigl\{(\sigma^i(\y^*)\a^i, \q^i) \bigr\}_{i \in I'}$ satisfies GARP, this equality implies that
$\{(\x^t,\p^t)\}_{t\in I'}$ satisfies GARP. Therefore,
$|I'|\le\HM(E)$. Since $|I''|\le T+1$, we obtain
\[
\HM_{\mathrm{Peak}}(D) =|I|=|I'|+|I''| \leq\HM(E)+T+1.
\] 
This completes the proof. \qed
\end{proof}

\section{Conclusion}
In this paper, we establish the exact computational boundaries for peak-oriented revealed preference analysis of survey data. We resolve a Varian-type question by presenting a polynomial-time algorithm to test peak-oriented rationalizability and construct a rationalizing utility function. The cornerstone of our positive result is a prefix-tree construction that leverages the combinatorial structure of survey answers to reduce an otherwise exponential orientation patterns to a polynomial number of relevant ones. Conversely, we show that measuring the extent of consistency via the peak-oriented Houtman--Maks index is NP-hard. Taken together, our results demonstrate that while exact preference elicitation under peak-orientation is computationally tractable, goodness-of-fit analysis for imperfect survey data is inherently intractable unless P=NP.

Motivated by the success of integer programming approach for address NP-Hard problems in revealed preference theory~\citep{cherchye2015revealed,demuynck2023computing}, a promising direction for future research is to develop an exact integer programming formulation for computing the peak-oriented Houtman–Maks index. 



\section*{Statements and Declarations}

\textbf{Financial interest} The authors did not receive support from any organization for the submitted work.

\noindent \textbf{Competing Interests} The authors have no competing interests, conflict of interest, or non-financial interests to declare that are relevant to the content of the article.

\bibliographystyle{spbasic}
\bibliography{references}

\end{document}